\title{\sc Estimates for Deviations from Exact Solutions of Maxwell's Initial Boundary Value Problem}
\def\shorttitle{Deviations from Exact Solutions of Maxwell's Equations}
\def\pauthor{Dirk Pauly, Sergey Repin, Tuomo Rossi}
\def\mylabelonoff{off}
\def\allowdisbrk{no}
\author{{\sf\pauthor}}
\markboth{\pauthor}{\shorttitle}
\numberwithin{equation}{section}
\newenvironment{acknow}{{\vspace*{1cm}\noindent\bf Acknowledgements }}{}
\newcommand{\bewboxw}{\mbox{}\hfill $\square$ \\}
\newenvironment{proof}{{\noindent\bf Proof }}{\bewboxw}
\newenvironment{proofof}[1]{{\noindent\bf Proof of #1 }}{\bewboxw}
\newcommand{\keywords}[1]{{\noindent\bf Key Words }#1}
\newcommand{\amsclass}[1]{{\noindent\bf AMS MSC-Classifications }#1}
\newcommand{\mylabel}[1]{\label{#1}\fbox{{\rm #1}}}}{\newcommand{\mylabel}[1]{\label{#1}\makebox[0mm][]{}}}
\newcommand{\paper}[7]{\bibitem{#1} #2, `#3', {\it #4}, #5, (#6), #7.}
\newcommand{\papersub}[4]{\bibitem{#1} #2, `#3', (#4), submitted.}
\newcommand{\book}[6]{\bibitem{#1} #2, {\it #3}, #4, #5, (#6).}
\newcommand{\repjyu}[8]{\bibitem{#1} #2, `#3', {\it Reports of the Department of Mathematical Information Technology}, 
University of Jyv\"askyl\"a, Series #4. Scientific Computing, No. #4. #5/#6, ISBN #7, ISSN #8.}
\newcommand{\ol}{\overline}
\newcommand{\ub}{\underbrace}
\newcommand{\nz}{\mathbb{N}}
\newcommand{\rz}{\mathbb{R}}
\newcommand{\rzp}{\rz_{+}}
\newcommand{\rN}{\rz^N}
\newcommand{\rt}{\rz^3}
\DeclareMathOperator{\p}{\partial}
\newcommand{\pt}{\p_{t}}
\DeclareMathOperator{\curl}{curl}
\DeclareMathOperator{\iu}{i}
\newcommand{\mop}{M_{\Lambda}}
\newcommand{\eps}{\varepsilon}
\newcommand{\om}{\Omega}
\newcommand{\omt}{\om_t}
\def\e{\mathrm{e}}
\newcommand{\Et}{\tilde{E}}
\newcommand{\Ett}{\tilde{E}_{t}}
\newcommand{\et}{e_{t}}
\newcommand{\calB}{\mathcal{B}}
\newcommand{\calE}{\mathcal{E}}
\newcommand{\Hi}{\mathcal{H}(\Omega)}
\newcommand{\domdef}{\mathcal{D}}
\newcommand{\tmat}[4]{\begin{bmatrix}#1&#2\\#3&#4\end{bmatrix}}
\newcommand{\nrom}[1]{n_{#1,\rho}}
\newcommand{\Nrom}[1]{N_{#1,\rho}}
\newcommand{\noom}[1]{n_{#1,1}}
\newcommand{\ntrom}[1]{\tilde{n}_{#1,\rho}}
\newcommand{\Nrgom}[1]{N_{#1,\rho,\gamma}}
\newcommand{\Nroom}[1]{N_{#1,\rho,1}}
\newcommand{\Ntrgom}[1]{\tilde{N}_{#1,\rho,\gamma}}
\newcommand{\Ntroom}[1]{\tilde{N}_{#1,\rho,1}}
\def\Set#1#2{\left\{#1\,\mid\,#2\right\}}
\DeclareMathOperator{\Lebesgue}{\mathsf{L}}
\newcommand{\Lgen}[2]{\Lebesgue^{#1}_{#2}}
\def\Lo{\Lgen{1}{}}
\def\Lt{\Lgen{2}{}}
\def\Ltom{\Lt(\om)}
\DeclareMathOperator{\Sobolev}{\mathsf{H}}
\newcommand{\Hgen}[3]{\overset{#3}{\Sobolev}{}^{#1}_{#2}}
\def\Ht{\Hgen{2}{}{}}
\def\Hm{\Hgen{m}{}{}}
\def\Htom{\Ht(\om)}
\def\Hmom{\Hm(\om)}
\DeclareMathOperator{\Cont}{\mathsf{C}}
\newcommand{\Cgen}[2]{\overset{#2}{\Cont}{}^{#1}}
\def\Ciz{\Cgen{\infty}{\circ}}
\def\Cizom{\Ciz(\om)}
\def\Cz{\Cgen{0}{}}
\def\Co{\Cgen{1}{}}
\def\Ct{\Cgen{2}{}}
\def\Cl{\Cgen{\ell}{}}
\def\Clme{\Cgen{\ell-1}{}}
\def\Cop{\Cgen{1}{}_{p}}
\def\Ctp{\Cgen{2}{}_{p}}
\def\Clp{\Cgen{\ell}{}_{p}}
\newcommand{\Hcgenom}[3]{\Sobolev_{#1}(\curl^{#2}_{#3},\om)}
\newcommand{\Hcom}[1]{\Hcgenom{#1}{}{}}
\newcommand{\Hccom}[1]{\Hcgenom{#1}{\circ}{}}
\newcommand{\Hom}[1]{\Hgen{}{#1}{}(\om)}
\newcommand{\normdst}{\hspace{-0.4ex}}
\newcommand{\scp}[2]{\left\langle#1,#2\right\rangle}
\newcommand{\scpom}[2]{\scp{#1}{#2}_{\om}}
\newcommand{\scpomt}[2]{\scp{#1}{#2}_{\omt}}
\newcommand{\norm}[1]{\left|\normdst\left|#1\right|\normdst\right|}
\newcommand{\normmuom}[1]{\norm{#1}_{\mu,\om}}
\newcommand{\normmumoom}[1]{\norm{#1}_{\mu^{-1},\om}}
\newcommand{\normepsom}[1]{\norm{#1}_{\eps,\om}}
\newcommand{\normepsmoom}[1]{\norm{#1}_{\eps^{-1},\om}}
\newcommand{\normemrom}[1]{\norm{#1}_{\eps,\mu^{-1},\rho,\om}}
\newcommand{\normmuomt}[1]{\norm{#1}_{\mu,\om_{t}}}
\newcommand{\normmumoomt}[1]{\norm{#1}_{\mu^{-1},\omt}}
\newcommand{\normepsomt}[1]{\norm{#1}_{\eps,\omt}}
\newcommand{\normepsmoomt}[1]{\norm{#1}_{\eps^{-1},\omt}}
\newcommand{\normemromt}[1]{\norm{#1}_{\eps,\mu^{-1},\rho,\omt}}
\newtheorem{lem}{Lemma}
\newtheorem{theo}[lem]{Theorem}
\newtheorem{cor}[lem]{Corollary}
\newtheorem{rem}[lem]{Remark}
\newcommand{\addressesdirksergeytuomo}{
\footnotesize
\vspace*{1cm}
\begin{center}
\begin{tabular}{lll}
{\sf\small Dirk Pauly} \vspace*{1mm}\\
Universit\"at Duisburg-Essen \\
Campus Essen \\
Fakult\"at f\"ur Mathematik \\
Universit\"atsstr. 2 \\
45117 Essen \\
Germany \\
e-mail: {\tt dirk.pauly@uni-due.de} \\
\\
{\sf\small Sergey Repin} \vspace*{1mm}\\
V.A. Steklov Mathematical Institute & and & University of Jyv\"askyl\"a \\
St. Petersburg Branch & & Faculty of Information Technology \\
 & & Department of \\
 & & Mathematical Information Technology \\
Fontanka 27 & & P.O. Box 35 (Agora) \\
191011 St. Petersburg & & FI-40014 Jyv\"askyl\"a \\
Russia & & Finland \\
e-mail: {\tt repin@pdmi.ras.ru} & & e-mail: {\tt serepin@jyu.fi} \\
\\
{\sf\small Tuomo Rossi} \vspace*{1mm}\\
University of Jyv\"askyl\"a \\
Faculty of Information Technology \\
Department of \\
Mathematical Information Technology \\
P.O. Box 35 (Agora) \\
FI-40014 Jyv\"askyl\"a \\
Finland \\
e-mail: {\tt tuomo.rossi@jyu.fi}
\end{tabular}
\end{center}}
\begin{document}

\maketitle{}

\begin{abstract}
In this paper, we consider an initial boundary value problem
for Maxwell's equations. For this hyperbolic type problem,
we derive guaranteed and computable upper bounds for the difference
between the exact solution and any pair of vector fields in the
space-time cylinder that belongs to the corresponding
admissible energy class. For this purpose,
we use a method suggested in \cite{ReWave} for the wave equation.\\
\keywords{Cauchy problem for Maxwell's equations, functional a posteriori estimates}\\
\amsclass{65N15, 35L15, 78M30}
\end{abstract}

\tableofcontents

\section{Introduction}

In this paper, we derive computable upper bounds for the
distance between the exact solution $(E,H)$
of an initial boundary value problem for Maxwell's equations,
which have in their second order form a hyperbolic nature,
and any pair of vector fields $(\Et,\tilde{H})$ belonging to the
admissible energy class of the considered problem.
As our techniques rely on second order methods
and the Maxwell system decouples
in its second order version for the electric field $E$
and the magnetic field $H$, we focus on $E$ in our analysis. 
The vector field $\Et$ can be considered as an approximation of $E$ 
computed with the help of a numerical method.  
In other words, 
we deduce nonnegative functionals $\calB$ 
(also called error majorants or upper bounds) 
that depend only on $\Et$ and known data 
(coefficients, domain, right hand side and boundary data)
and satisfy the following properties:
\begin{enumerate}
\item $\calE(E-\Et)\leq\calB(\Et)$ for all admissible $\Et$.
\item $\calB(\Et)=0$ iff $\Et=E$.
\item $\calB(\Et)\to0$ if $\calE(E-\Et)\to0$.
\end{enumerate}
Here, $\calE$ is a suitable error measure
of the energy of the system defined on a space-time cylinder
(e.g., a $\Lt$-energy norm).

Such Functionals $\calB$ provide an explicit verification 
of the accuracy of approximations. 
Indeed, we see that $\calB(\Et)$ is small then
$\Et$ belongs to a certain neighborhood of the exact solution. 
Moreover, $\calB$ vanishes only at the exact solution $E$.
The third property shows that the majorant $\calB$
possesses the continuity property with respect to all sequences
converging in the topology induced by the energy norm $\calE$.

Estimates of such a type (often called {\em functional a posteriori estimates}) 
can be derived by at least two methods. 
The first method is based on variational techniques and 
applicable for problems that admit a variational statement. 
By this method a posteriori error estimates were derived in \cite{ReNonlinvar,ReVar} 
and many other publications (see \cite{NeRe} for a systematic overview).
Another method is based upon the analysis of the
integral identity (variational formulation) 
that defines the corresponding generalized solution. 
This method was suggested in \cite{ReTwoSided}, 
where it was also shown that for linear elliptic equations both methods
(variational and nonvariational) lead to the same estimates.
Later the nonvariational method was also applied to nonlinear elliptic problems
and to certain classes of nonlinear problems in continuum mechanics
(e.g., for variational inequalities \cite{BiFuRe,FuRe,Re2007})
and to initial boundary value problems associated with parabolic type
equations \cite{ReLincei}.
A consequent exposition of the 'nonvariational' a posteriori error estimation method
is presented in the book \cite{ReGruyter}.
Analogous estimates have been derived for elliptic problems 
in exterior domains as well \cite{PaReEll}.

In this paper, we are concerned with an initial boundary value problem 
for Maxwell's equations. For the stationary version of this problem, 
functional a posteriori estimates have been derived earlier 
in \cite{PaReMaxbd} for bounded domains.
However, the hyperbolic Maxwell problem
essentially differs from the stationary case 
and the estimates are derived by a new technique. 
The derivation method is also based on the analysis
of a basic integral relation 
but uses a rather different {\em modus operandi}. 
The reason for this lies in the specific properties of the respective differential
operator involving second order time and spatial derivatives with opposite signs.  
We overcome the difficulties arising due to this fact
with the help of a method suggested in \cite{ReWave} for the wave equation,
which is closely related,
and deduce computable upper bounds for the distance to the exact solution
measured in a canonical $\Lt$-energy norm.

Our main results are presented in Section \ref{secfirstest} 
by Theorems \ref{theoone} and \ref{theotwo}, 
which provide computable and guaranteed majorants 
for the error measures \eqref{errone} and \eqref{errtwo}
of the electric field $E$.
These first (and simplest) majorants are derived
under stronger assumptions on the approximation.
They can be used if the approximation $\Et$ possesses
extra regularity, which sometimes may be difficult to guarantee in many numerical schemes.
In Section \ref{secsecondest} we prove corresponding results
under weaker assumption on the approximation $\Et$,
which are free of these drawbacks, but have a more complicated structure.
Finally, in section \ref{bothfields} we estimate the error 
for the approximation of the magnetic field $H$ as well
and thus the error of the approximation of the full solution $(E,H)$. 

We note that the respective functionals generate new variational problems,
where exact lower bounds vanish and are attained only on the exact solution. 
In applied analysis, the functionals can be used 
for a posteriori control of errors of approximate solutions 
obtained by various numerical methods. 

\section{Basic problem}

Let $\Omega$ be a domain\footnote{, i.e., a connected open set,} 
in $\rt$ with Lipschitz continuous boundary
$\Gamma:=\partial\Omega$ and corresponding outward unit normal vector by $\nu$.
Furthermore, let $T>0$, $I:=(0,T)$ as well as $\Omega_{t}:=(0,t)\times\Omega$
and $\Gamma_{t}:=(0,t)\times\Gamma$ for all $t>0$
the space-time cylinder and cylinder barrel, respectively.
We consider the classical initial boundary value problem for Maxwell's equation:
Find vector fields $E$ and $H$ (electric and magnetic field), such that
\begin{align}
\pt E-\eps^{-1}\curl H&=F&&\text{in }\Omega_{T},\mylabel{maxeqone}\\
\pt H+\mu^{-1}\curl E&=G&&\text{in }\Omega_{T},\mylabel{maxeqtwo}\\
\nu\times E|_{\Gamma}&=0&&\text{on }\Gamma_{T},\mylabel{maxeqbc}\\
E(0)=E(0,\,\cdot\,)&=E_{0}&&\text{in }\Omega,\mylabel{maxeqicone}\\
H(0)=H(0,\,\cdot\,)&=H_{0}&&\text{in }\Omega.\mylabel{maxeqictwo}
\end{align}
Here $\eps$ and $\mu$ denote time-independent, real, symmetric and positive definite matrices 
with measurable, bounded coefficients that describe properties of the
media (dielectricity and permeability, respectively).
For the sake of brevity, matrices (matrix-valued functions) 
with such properties are called 'admissible'.
We note that the corresponding inverse matrices are admissible as well. 

\begin{rem}
\mylabel{remdomain}
The underlying domain $\Omega$ may be bounded or unbounded.
Contrary to the stationary cases, i.e., static or time-harmonic equations,
the Sobolev spaces used for the solution theory of the Cauchy problem 
do not differ whether the domain is bounded or not. 
For instance, in exterior domains one has to work with polynomially weighted Sobolev spaces
what naturally would lead to weighted error estimates as well.  
See \cite{kuhnpaulyreg,paulytimeharm,paulystatic,paulydeco,paulyasym} 
for a detailed description.
\end{rem}

By $\Ltom$ we denote the usual scalar $\Lt$-Hilbert space
of square integrable functions on $\Omega$
and by $\Hmom$, $m\in\nz$, the usual Sobolev spaces.
$\Hom{}$ denotes the Hilbert space of real-valued $\Lt$-vector fields, i.e., $\Lt(\Omega,\rt)$.
For the sake of simplicity, we restrict our analysis to the case of real-valued
functions and vector fields. The generalization to complex-valued spaces
is straight forward. Moreover, we define
$$\Hcom{}:=\Set{\Phi\in\Hom{}}{\curl\Phi\in\Hom{}},\quad
\Hccom{}:=\overline{\Cizom},$$
where the closure is taken in the natural norm of $\Hcom{}$.
The homogeneous tangential boundary condition
\eqref{maxeqbc} is generalized in $\Hccom{}$ by Gau{\ss}' theorem.
Equipped with their natural scalar products all these spaces are Hilbert spaces.

To formulate and obtain a proper Hilbert space solution theory for the latter Cauchy problem,
we need some more suitable Hilbert spaces.
We set
$$\Hi:=\Hom{}\times\Hom{}$$
as a set and equip this space with the weighted scalar product
$$\scp{(E,H)}{(\Phi,\Psi)}_{\Hi}:=\scp{\Lambda(E,H)}{(\Phi,\Psi)}_{\Hom{}\times\Hom{}}
=\scp{\eps E}{\Phi}_{\Hom{}}+\scp{\mu H}{\Psi}_{\Hom{}},$$
where
$$\Lambda:=\tmat{\eps}{0}{0}{\mu}.$$
For the sake of a short notation, we will write for domains $\Xi\subset\rN$
$$\norm{\,\cdot\,}_{\Xi}:=\norm{\,\cdot\,}_{\Lt(\Xi,\rz^{\ell})},\quad
\scp{\,\cdot\,}{\,\cdot\,}_{\Xi}:=\scp{\,\cdot\,}{\,\cdot\,}_{\Lt(\Xi,\rz^{\ell})}$$
and for suitable matrices $A$
$$\norm{\,\cdot\,}_{A,\Xi}:=\norm{A^{1/2}\,\cdot\,}_{\Xi}=\scp{A\,\cdot\,}{\,\cdot\,}_{\Xi}^{1/2}.$$
Furthermore, we introduce the linear operator
$$\mop:\domdef(\mop)\subset\Hi\to\Hi,\quad(\Phi,\Psi)\mapsto\iu\Lambda^{-1}M(\Phi,\Psi)$$
putting
$$\domdef(\mop):=\Hccom{}\times\Hcom{},\quad M:=\tmat{0}{-\curl}{\curl}{0}.$$
Then, a solution of the Cauchy problem \eqref{maxeqone}-\eqref{maxeqictwo}
is to be understood as a solution of the Cauchy problem
\begin{align}
(\pt-\iu\mop)(E,H)&=(F,G),\mylabel{maxeq}\\
(E,H)(0)&=(E_{0},H_{0}).\mylabel{maxeqic}
\end{align}

Utilizing a slight and obvious modification (variation of constant formula) 
of \cite[Theorem 8.5]{leisbook},
the Cauchy problem \eqref{maxeq}-\eqref{maxeqic}
has unique solution for all $T$ (we may also replace the interval $I$ by $\rz$)
by spectral theory since $\mop$ is self-adjoint. The spectral theorem suggests
$$(E,H)(t)=\exp(\iu t\mop)(E_{0},H_{0})+\int_{0}^{t}\exp(\iu(t-s)\mop)(F,G)(s)\,ds,\quad 
t\in\ol{I}$$
as solution. We get:

\begin{theo}
\mylabel{existtheo}
Let $(F,G)\in\Lo(I,\Hi)$ and $(E_{0},H_{0})\in\Hi$.
Then, the Cauchy problem \eqref{maxeq}-\eqref{maxeqic} is uniquely solvable in
\begin{itemize}
\item[\bf(i)] $\Cz(\ol{I},\Hi)$;
\item[\bf(ii)] $\Cz(\ol{I},\domdef(\mop))\cap\Co(\ol{I},\Hi)$, if additionally\\
$(F,G)\in\Lo(I,\domdef(\mop))\cap\Cz(\ol{I},\Hi)$
and $(E_{0},H_{0})\in \domdef(\mop)$;
\item[\bf(iii)] $\Cz(\ol{I},\domdef(\mop^2))\cap\Co(\ol{I},\domdef(\mop))\cap\Ct(\ol{I},\Hi)$, 
if additionally\\
$(F,G)\in\Lo(I,\domdef(\mop^2))\cap\Cz(\ol{I},\domdef(\mop))\cap\Co(\ol{I},\Hi)$
and $(E_{0},H_{0})\in \domdef(\mop^2)$.
\end{itemize}
\end{theo}

Here, $(E,H)\in\domdef(\mop^2)$, if and only if 
$$(E,H),(\eps^{-1}\curl H,\mu^{-1}\curl E)\in\domdef(\mop)=\Hccom{}\times\Hcom{}.$$

\begin{rem}
\mylabel{existtheorem}
\begin{itemize}
\item[\bf(i)] Theorem \ref{existtheo} holds if we replace the spaces $\Cl$ 
by spaces of vector fields having such regularity only piecewise, i.e., $\Clp$,
where $\Phi\in\Clp$, if and only if $\Phi\in\Clme$ and $\Phi$ is piecewise $\Cl$.  
\item[\bf(ii)] To obtain the second order regularity in Theorem \ref{existtheo} (iii)
and in view of numerical applications
it is sufficient to assume that $(E_{0},H_{0})$ has $\Htom$-components 
and that $(F,G)$ has $\Ct(\ol{\Omega}_{T})$-components with bounded derivatives.
\end{itemize}
\end{rem}

If $(E,H)$ admits the second order regularity of Theorem \ref{existtheo} (iii)
then we can apply $\pt+\iu\mop$ to \eqref{maxeq} and obtain
$$(\pt^2+\mop^2)(E,H)=(\tilde F,\tilde G),$$
where $(\tilde F,\tilde G):=(\pt+\iu\mop)(F,G)$.
Equivalently, we have
$$(\pt\Lambda\pt-M\Lambda^{-1}M)(E,H)=\Lambda(\tilde F,\tilde G).$$
Since 
$$-M\Lambda^{-1}M=\tmat{\curl\mu^{-1}\curl}{0}{0}{\curl\eps^{-1}\curl}$$
the latter equation decouples for the electric field $E$ and magnetic field $H$. 

In this paper, we intend to discuss the second order system
for the electric field $E$, which reads in classical terms
\begin{align}
(\pt\eps\pt+\curl\mu^{-1}\curl)E&=K:=\eps\tilde F&&\text{in }\Omega_{T},\mylabel{maxeqcurlcurl}\\
\nu\times E|_{\Gamma}&=0&&\text{on }\Gamma_{T},\mylabel{maxeqbctwo}\\
E(0)&=E_{0}&&\text{in }\Omega,\mylabel{maxeqicE}\\
\pt E(0)&=E_{0}':=\eps^{-1}\curl H_{0}+F(0)&&\text{in }\Omega.\mylabel{maxeqicptE}
\end{align}
Moreover, we assume throughout this paper that the second order regularity 
of Theorem \ref{existtheo} (iii) holds.

\begin{rem}
\mylabel{secondtofirstorder}
A solution of the second order problem \eqref{maxeqcurlcurl}-\eqref{maxeqicptE}
provides also a solution of the original first order problem 
\eqref{maxeqone}-\eqref{maxeqictwo}. 
In particular, under proper regularity assumptions on the data
the system \eqref{maxeqcurlcurl}-\eqref{maxeqicptE} 
is uniquely solvable as well. 
To show this it sufficies to set
$$H(t):=\int_{0}^{t}(G(s)-\mu^{-1}\curl E(s))ds+H_{0}.$$
Then, \eqref{maxeqbc} and \eqref{maxeqicone} hold and 
\eqref{maxeqtwo} and \eqref{maxeqictwo} follow directly. 
Furthermore, to prove \eqref{maxeqone} we use 
\eqref{maxeqcurlcurl} and the above definition of $H$ and obtain
\begin{align*}
\pt E(t)&=\int_{0}^{t}\p_{s}^2E(s)ds+E_{0}'
=-\eps^{-1}\curl\int_{0}^{t}\mu^{-1}\curl E(s)ds+\int_{0}^{t}\tilde{F}(s)ds+E_{0}'\\
&=\eps^{-1}\curl H(t)+\int_{0}^{t}(\ub{\tilde{F}(s)-\eps^{-1}\curl G(s)}_{=\p_{s}F(s)})ds
+\ub{E_{0}'-\eps^{-1}\curl H_{0}}_{=F(0)}.
\end{align*}
Hence, our further analysis is based on \eqref{maxeqcurlcurl}-\eqref{maxeqicptE}. 
\end{rem}

\section{First form of the deviation majorant}\mylabel{secfirstest}

Let $\Et$ be an approximation of $E$.
In this section, we assume that

\begin{align}
\mylabel{secordreget}
\Et&\in\Cop(\ol{I},\Hcom{})\cap\Ctp(\ol{I},\Hom{}).
\end{align}

Our goal is to find a computable upper bound of the error 
$$e:=E-\Et$$
associated with $\Et$.
For all $t\in\ol{I}$ and $\rho\in\rzp$ we define two nonnegative quantities
\begin{align}
\nrom{\Phi}(t):=\normemrom{\Phi}^2(t)&:=\normepsom{\pt\Phi}^2(t)
+\rho\normmumoom{\curl\Phi}^2(t),\mylabel{errone}\\
\Nrom{\Phi}(t):=\normemromt{\Phi}^2&:=\normepsomt{\pt\Phi}^2
+\rho\normmumoomt{\curl\Phi}^2,\mylabel{errtwo}
\end{align}
which generate natural energy norms for the accuracy evaluation.
We note that by Fubini's theorem $\int_{\omt}=\int_{0}^{t}\int_{\om}$ and thus
$$\nrom{\Phi}=\Nrom{\Phi}',\quad\Nrom{\Phi}(t)=\int_{0}^{t}\nrom{\Phi}(s)ds.$$

\begin{theo}
\mylabel{theoone}
Let $\rho\in(0,1)$ and $\Et$ be an approximation satisfying \eqref{secordreget}.
Moreover, let $\pt e\in\Hccom{}$ for all $t\in I$. Then, for all $t\in\ol{I}$
\begin{align}
\mylabel{theoonemain}
\nrom{e}(t)\leq\inf_{Y,\gamma}b_{\Et,\rho;Y,\gamma}(t),\quad
\Nrom{e}(t)\leq\inf_{Y,\gamma}B_{\Et,\rho;Y,\gamma}(t),
\end{align}
where
\begin{align*}
b_{\Et,\rho;Y,\gamma}(t)&
:=\gamma\e^{\gamma t}\int_{0}^{t}\e^{-\gamma s}f_{\Et,Y,\gamma,\rho}(s)ds
+f_{\Et,Y,\gamma,\rho}(t),\\
B_{\Et,\rho;Y,\gamma}(t)&
:=\e^{\gamma t}\int_{0}^{t}\e^{-\gamma s}f_{\Et,Y,\gamma,\rho}(s)ds
\end{align*}
and the infima are taken over $\gamma\in\rzp$ and $Y\in\Cop(\ol{I},\Hcom{})$.
Here,
\begin{align*}
f_{\Et,Y,\gamma,\rho}&:=g_{\Et,Y,\gamma,\rho}+z_{\Et,Y},\\
g_{\Et,Y,\gamma,\rho}(t)&:=\gamma^{-1}\normepsmoomt{\hat{K}_{\Et,Y}}^2
+(\gamma\rho)^{-1}\normmuomt{\pt \tilde{K}_{\Et,Y}}^2
+(1-\rho)^{-1}\normmuom{\tilde{K}_{\Et,Y}}^2(t),\\
z_{\Et,Y}&:=\noom{e}(0)+2\scpom{\tilde{K}_{\Et,Y}}{\curl e}(0),
\end{align*}
where
\begin{align*}
\hat{K}_{\Et,Y}&:=\hat{K}_{\pt^2\Et,\curl Y}:=\eps\pt^2\Et+\curl Y-K,\\
\tilde{K}_{\Et,Y}&:=\tilde{K}_{\curl\Et,Y}:=\mu^{-1}\curl\Et-Y.
\end{align*}
\end{theo}

\begin{rem}
\mylabel{theooneremone}
We outline that the functionals $f_{\Et,Y,\gamma,\rho}$, 
$b_{\Et,\rho;Y,\gamma}$ and $B_{\Et,\rho;Y,\gamma}$
depend only on known data, the approximation $\Et$,
the free variable $Y$ and the free parameters $\rho$, $\gamma$,
and do not involve the unknown exact solution $E$.
Thus, these quantities are explicitly computable
once the approximate solution $\Et$ has been constructed.
We note that the 'zero term' $z_{\Et,Y}$ represents the error
in the initial conditions. In particular, 
$z_{\Et,Y}$ mainly consists of
first order derivatives of the initial error 
$e(0)=E_{0}-\Et(0)$.
Furthermore, $Y$ may even be chosen from the larger space 
$$\Cop(\ol{I},\Hom{})\cap\Lt(I,\Hcom{}).$$
\end{rem}

\begin{rem}
\mylabel{theooneremtwo}
The absolute value of the zero term
$$z_{\Et,Y}=\normepsom{\pt e}^2(0)+\normmumoom{\curl e}^2(0)
+2\scpom{\tilde{K}_{\Et,Y}}{\curl e}(0)$$ 
may be estimated from above by the quantities
\begin{align*}
\tilde{z}_{\Et,Y}&:=\noom{e}(0)+2|\scpom{\tilde{K}_{\Et,Y}}{\curl e}(0)|\\
&\,\,=\normepsom{\pt e}^2(0)+\normmumoom{\curl e}^2(0)
+2|\scpom{\tilde{K}_{\Et,Y}}{\curl e}|(0)\\
\leq\hat{z}_{\Et,Y}&:=\normepsom{\pt e}^2(0)+2\normmumoom{\curl e}^2(0)
+\normmuom{\tilde{K}_{\Et,Y}}^2(0),
\end{align*}
which are nonnegative and easily computable. Furthermore,
\begin{itemize}
\item[\bf(i)] $z_{\Et,Y}=0$, 
if $\pt e(0)=0$ and $\curl e(0)=0$.
\item[\bf(ii)] $\tilde{z}_{\Et,Y}=0$, 
if any only if $\pt e(0)=0$ and $\curl e(0)=0$.
\item[\bf(iii)] $\hat{z}_{\Et,Y}=0$, 
if any only if $\pt e(0)=0$ and $\curl e(0)=0$ and $\mu Y(0)=\curl\Et(0)$.
\item[\bf(iv)] $g_{\Et,Y,\gamma,\rho}=0$, 
if and only if $\mu Y=\curl\Et$ and $\curl Y=K-\eps\pt^2\Et$.
\end{itemize}
Therefore, choosing the functional $f_{\Et,Y,\gamma,\rho}$ 
with $\tilde{z}_{\Et,Y}$ or $\hat{z}_{\Et,Y}$ we see that
for all $t\in\ol{I}$ the functional $b_{\Et,\rho;Y,\gamma}(t)$
vanishes, if and only if
\begin{align}
\mylabel{bvanish}
\pt\Et(0)=E_{0}',\quad\curl\Et(0)=\curl E_{0},\quad\mu Y=\curl\Et,\quad\curl Y=K-\eps\pt^2\Et.
\end{align}
Thus, $\pt e$ and $\curl e$ vanish, if and only if $\nrom{e}=0$,
which is implied by $b_{\Et,\rho;Y,\gamma}=0$.
The latter condition is equivalent to \eqref{bvanish}.
The same holds for the energy norm $\Nrom{e}$ and the functional $B_{\Et,\rho;Y,\gamma}$.
\end{rem}

\begin{proofof}{Theorem \ref{theoone}}
We start with deriving first order ordinary differential inequalities, 
which then lead to the estimates by Gronwall's lemma (see appendix). 
Since $\pt e$ belongs to $\Hccom{}$, we have
\begin{align*}
\pt\noom{e}(t)&=2\scpom{\eps\pt^2e}{\pt e}(t)+2\scpom{\mu^{-1}\curl e}{\curl\pt e}(t)\\
&=2\scpom{K-\eps\pt^2\Et}{\pt e}(t)-2\scpom{\mu^{-1}\curl\Et-Y+Y}{\curl\pt e}(t)\\
&=-2\scpom{\hat{K}_{\Et,Y}}{\pt e}(t)
-2\pt\scpom{\tilde{K}_{\Et,Y}}{\curl e}(t)+2\scpom{\pt \tilde{K}_{\Et,Y}}{\curl e}(t).
\end{align*}
Thus, by integration
\begin{align}
\mylabel{estnone}
\noom{e}(t)=z_{\Et,Y}-2\big(\ub{\scpom{\tilde{K}_{\Et,Y}}{\curl e}}_{=:S_{1}}(t)
-\ub{\scpomt{\pt \tilde{K}_{\Et,Y}}{\curl e}}_{=:S_{2}(t)}
+\ub{\scpomt{\hat{K}_{\Et,Y}}{\pt e}}_{=:S_{3}(t)}\big).
\end{align}
We estimate the scalar products $S_{\ell}$ by 
\begin{align}
\begin{split}
2|S_{1}(t)|&\leq\alpha\normmumoom{\curl e}^2(t)+\alpha^{-1}\normmuom{\tilde{K}_{\Et,Y}}^2(t),\\
2|S_{2}(t)|&\leq\beta\normmumoomt{\curl e}^2+\beta^{-1}\normmuomt{\pt \tilde{K}_{\Et,Y}}^2,\\
2|S_{3}(t)|&\leq\gamma\normepsomt{\pt e}^2+\gamma^{-1}\normepsmoomt{\hat{K}_{\Et,Y}}^2,
\end{split}\mylabel{estSl}
\end{align}
where $\alpha,\beta,\gamma\in\rzp$ are arbitrary. For $\rho\in(0,1)$ 
we choose $\alpha:=1-\rho\in(0,1)$. Then, for arbiratry $\gamma\in\rzp$ 
we put $\beta:=\gamma\rho\in\rzp$. Inserting \eqref{estSl} into \eqref{estnone},
we achieve
$$\nrom{e}\leq\gamma\Nrom{e}+f_{\Et,Y,\gamma,\rho},$$
which may be written in two ways
$$\nrom{e}(t)\leq\gamma\int_{0}^{t}\nrom{e}(s)ds+f_{\Et,Y,\gamma,\rho}(t),\quad
\Nrom{e}'(t)\leq\gamma\Nrom{e}(t)+f_{\Et,Y,\gamma,\rho}(t).$$
Gronwall's inequalities, 
i.e., \eqref{gronwalloneesttwo} and \eqref{gronwalltwoesttwo}, 
complete the proof.
\end{proofof}

Since $\pt e=0$ implies $e(t)=e(0)=E_{0}-\Et(0)$ constantly for all $t$ we obtain:

\begin{theo}
\mylabel{theotwo}
Let an approximation $\Et$ and $Y$ as in Theorem \ref{theoone} be given.
Then, the following two statements are equivalent:
\begin{itemize}
\item[\bf(i)] $\Et(0)=E_{0}$ and $b_{\Et,\rho;Y,\gamma}=0$.
\item[\bf(ii)] $\Et=E$ and $\mu Y=\curl E$.
\end{itemize}
In words: Let the approximation $\Et$ satisfy the first initial condition $\Et(0)=E_{0}$ exactly.
Then, the functional $b_{\Et,\rho;Y,\gamma}$ vanishes if and only if 
the approximation $\Et$ equals $E$ and $\mu Y$ equals $\curl E$. 
\end{theo}

\begin{rem}
\mylabel{theotworemone}
The assertions of the latter theorem remain valid 
if we replace $b_{\Et,\rho;Y,\gamma}$ by $B_{\Et,\rho;Y,\gamma}$.
\end{rem}

\begin{rem}
\mylabel{theotworemtwo}
The latter theorem provides a new variational formulation 
for the second order problem \eqref{maxeqcurlcurl}-\eqref{maxeqicptE}
and thus, in view of Remark \ref{secondtofirstorder},
for the original first order problem \eqref{maxeqone}-\eqref{maxeqictwo} as well.
\end{rem}

\subsection{Refinement of the estimate}

We can derive sharper estimates if $\rho$ and $\gamma$ in Theorem \ref{theoone}
depend on time. Then, we replace 
$$\nrom{\Phi}(t),\quad\Nrom{\Phi}(t)=\int_{0}^{t}\nrom{\Phi}(s)ds
=\int_{0}^{t}\big(\normepsom{\pt\Phi}^2(s)+\rho\normmumoom{\curl\Phi}^2(s)\big)ds$$
by
\begin{align*}
\ntrom{\Phi}(t)&:=\normepsom{\pt\Phi}^2(t)+\rho(t)\normmumoom{\curl\Phi}^2(t),\\
\Ntrgom{\Phi}(t)&:=\int_{0}^{t}\gamma(s)\ntrom{\Phi}(s)ds
=\int_{0}^{t}\gamma(s)\big(\normepsom{\pt\Phi}^2(s)+\rho(s)\normmumoom{\curl\Phi}^2(s)\big)ds,
\end{align*}  
respectively. In this case, $\Ntrgom{\Phi}'=\gamma\ntrom{\Phi}$
and we modify \eqref{estSl} in an obvious manner, i.e.,
\begin{align}
\begin{split}
2|S_{1}(t)|&\leq\alpha(t)\normmumoom{\curl e}^2(t)
+\alpha^{-1}(t)\normmuom{\tilde{K}_{\Et,Y}}^2(t),\\
2|S_{2}(t)|&\leq\int_{0}^{t}\beta(s)\normmumoom{\curl e}^2(s)ds
+\int_{0}^{t}\beta^{-1}(s)\normmuom{\pt \tilde{K}_{\Et,Y}}^2(s)ds,\\
2|S_{3}(t)|&\leq\int_{0}^{t}\gamma(s)\normepsom{\pt e}^2(s)ds
+\int_{0}^{t}\gamma^{-1}(s)\normepsmoom{\hat{K}_{\Et,Y}}^2(s)ds.
\end{split}\mylabel{estSlmod}
\end{align}
By \eqref{estnone} and \eqref{estSlmod} we find that
$$\ntrom{e}(t)\leq\int_{0}^{t}\gamma(s)\ntrom{e}(s)ds+\tilde{f}_{\Et,Y,\gamma,\rho}(t),\quad
\Ntrgom{e}'(t)\leq\gamma(t)\Ntrgom{e}(t)+\gamma(t)\tilde{f}_{\Et,Y,\gamma,\rho}(t),$$
where $\tilde{f}_{\Et,Y,\gamma,\rho}:=\tilde{g}_{\Et,Y,\gamma,\rho}+z_{\Et,Y}$ with
\begin{align*}
\tilde{g}_{\Et,Y,\gamma,\rho}(t)
&:=(1-\rho)^{-1}(t)\normmuom{\tilde{K}_{\Et,Y}}^2(t)\\
&\qquad+\int_{0}^{t}\gamma^{-1}(s)\normepsmoom{\hat{K}_{\Et,Y}}^2(s)ds
+\int_{0}^{t}(\gamma\rho)^{-1}(s)\normmuom{\pt \tilde{K}_{\Et,Y}}^2(s)ds.
\end{align*}
We apply \eqref{gronwalloneestone} and \eqref{gronwalltwoestone}, respectively,
and arrive at the following result:

\begin{theo}
\mylabel{theothree}
Let $\rho:\ol{I}\to(0,1)$ and $\Et$ be an approximation satisfying \eqref{secordreget}.
Moreover, let $\pt e\in\Hccom{}$ for all $t\in I$. Then, for all $t\in\ol{I}$
\begin{align}
\mylabel{theothreemain}
\ntrom{e}(t)\leq\inf_{Y,\gamma}\tilde{b}_{\Et,\rho;Y,\gamma}(t),\quad
\Ntrgom{e}(t)\leq\inf_{Y,\gamma}\tilde{B}_{\Et,\rho;Y,\gamma}(t),
\end{align}
where
\begin{align*}
\tilde{b}_{\Et,\rho;Y,\gamma}(t)
&:=\e^{\Gamma(t)}\int_{0}^{t}\e^{-\Gamma(s)}\gamma(s)\tilde{f}_{\Et,Y,\gamma,\rho}(s)ds
+\tilde{f}_{\Et,Y,\gamma,\rho}(t),\quad
\Gamma(t):=\int_{0}^{t}\gamma(s)ds,\\
\tilde{B}_{\Et,\rho;Y,\gamma}(t)
&:=\e^{\Gamma(t)}\int_{0}^{t}\e^{-\Gamma(s)}\gamma(s)\tilde{f}_{\Et,Y,\gamma,\rho}(s)ds
\end{align*}
and the infima are taken over $\gamma:\ol{I}\to\rzp$ and $Y\in\Cop(\ol{I},\Hcom{})$.
\end{theo}

\begin{rem}
\mylabel{theothreerem}
\begin{itemize}
\item[\bf(i)] The corresponding other assertions hold like in Theorem \ref{theoone}.
\item[\bf(ii)] If $\gamma$ is constant we get the same formulas as in Theorem \ref{theoone}, i.e.,
\begin{align*}
\tilde{b}_{\Et,\rho;Y,\gamma}(t)
&=\gamma\e^{\gamma t}\int_{0}^{t}\e^{-\gamma s}\tilde{f}_{\Et,Y,\gamma,\rho}(s)ds
+\tilde{f}_{\Et,Y,\gamma,\rho}(t),\\
\tilde{B}_{\Et,\rho;Y,\gamma}(t)
&=\gamma\e^{\gamma t}\int_{0}^{t}\e^{-\gamma s}\tilde{f}_{\Et,Y,\gamma,\rho}(s)ds.
\end{align*}
We note that in this case $\Ntrgom{e}=\gamma\Ntroom{e}$.
If $\gamma$ and $\rho$ are both constant the estimates coincide 
with those of Theorem \ref{theoone}.
\item[\bf(iii)] Since the latter estimates are stronger it is clear that
Theorem \ref{theotwo} and Remarks \ref{theotworemone} and \ref{theotworemtwo}
hold as well. 
\end{itemize}
\end{rem}

\section{Second form of the deviation majorant}\mylabel{secsecondest}

The estimates presented in Theorems \ref{theoone} and \ref{theothree} are derived 
for approximations $\Et$ having second order time derivatives.
This requirement may be difficult to satisfy in practice 
because typical approximate solutions possess only first order time derivatives. 
In this section, we derive estimates applicable for approximations of such a type.

As above, $\Et$ is an approximation of $E$, 
but now we also introduce a vector field $\Ett$ considered as an approximation of $\pt E$. 
Hence, we define both the error and the error of the time derivative separately by
$$e:=E-\Et,\quad\et:=\pt E-\Ett.$$ 
We note that in general $\Ett\neq\pt\Et$ and therefore $e_{t}\neq\pt e$. 
Henceforth, we assume that
\begin{align}
\mylabel{Etass}
\begin{split}
\Et,\Ett&\in\Cop(\ol{I},\Hcom{}),\\
\et&\in\Hccom{}\text{ for all }t\in I,
\end{split}
\intertext{where it would be sufficient to assume that}
\begin{split}
\Et&\in\Cop(\ol{I},\Hcom{}),\\
\Ett&\in\Cop(\ol{I},\Hom{})\cap\Lt(I,\Hcom{}),\\
\Ett&\in\Hccom{}\text{ for all }t\in I.
\end{split}\nonumber
\end{align}
With two nonnegative, real functions $\rho$ and $\gamma$ on $\ol{I}$
we define two energy norms
\begin{align*}
\nrom{\Phi,\Psi}(t)&:=\normepsom{\Phi}^2(t)+\rho(t)\normmumoom{\curl\Psi}^2(t),\\
\Nrgom{\Phi,\Psi}(t)&:=\int_{0}^{t}\gamma(s)\nrom{\Phi,\Psi}(s)ds
=\int_{0}^{t}\gamma(s)\big(\normepsom{\Phi}^2(s)+\rho(s)\normmumoom{\Psi}^2(s)\big)ds.
\end{align*}  
Then, $\Nrgom{\Phi,\Psi}'=\gamma\nrom{\Phi,\Psi}$.

\begin{theo}
\mylabel{theofour}
Let $\rho:\ol{I}\to(0,1)$ and $\Et$ be an approximation satisfying \eqref{Etass}.
Then, for all $t\in\ol{I}$
\begin{align}
\mylabel{theofourmain}
\nrom{\et,e}(t)\leq\inf_{Y,\gamma}b_{\Et,\Ett,\rho;Y,\gamma}(t),\quad
\Nrgom{\et,e}(t)\leq\inf_{Y,\gamma}B_{\Et,\Ett,\rho;Y,\gamma}(t),
\end{align}
where
\begin{align*}
b_{\Et,\Ett,\rho;Y,\gamma}(t)
&:=\e^{\Gamma(t)}\int_{0}^{t}\e^{-\Gamma(s)}\gamma(s)f_{\Et,\Ett,Y,\gamma,\rho}(s)ds
+f_{\Et,\Ett,Y,\gamma,\rho}(t),\quad
\Gamma(t):=\int_{0}^{t}\gamma(s)ds,\\
B_{\Et,\Ett,\rho;Y,\gamma}(t)
&:=\e^{\Gamma(t)}\int_{0}^{t}\e^{-\Gamma(s)}\gamma(s)f_{\Et,\Ett,Y,\gamma,\rho}(s)ds
\end{align*}
and the infima are taken over $\gamma:\ol{I}\to\rzp$ and $Y\in\Cop(\ol{I},\Hcom{})$.
Here,
\begin{align*}
f_{\Et,\Ett,Y,\gamma,\rho}&:=g_{\Et,\Ett,Y,\gamma,\rho}+z_{\Et,\Ett,Y},\\
g_{\Et,\Ett,Y,\gamma,\rho}(t)
&:=(1-\rho)^{-1}(t)\normmuom{\tilde{K}_{\Et,Y}}^2(t)+2\scpomt{\tilde{K}_{\Et,Y}}{\curl(\Ett-\pt\Et)}\\
&\qquad+\int_{0}^{t}\gamma^{-1}(s)\normepsmoom{\check{K}_{\Ett,Y}}^2(s)ds\\
&\qquad+\int_{0}^{t}(\gamma\rho)^{-1}(s)\normmuom{\mu^{-1}\curl\Ett-\pt Y}^2(s)ds,\\
z_{\Et,\Ett,Y}&:=\noom{\et,e}(0)+2\scpom{\tilde{K}_{\Et,Y}}{\curl e}(0),
\end{align*}
where $\check{K}_{\Ett,Y}:=\hat{K}_{\pt\Ett,\curl Y}:=\eps\pt\Ett+\curl Y-K$.
\end{theo}

\begin{rem}
\mylabel{theofourremone}
If $\Ett=\pt\Et$ then the estimates coincide with those of Theorem \ref{theothree}.
Furthermore, Remark \ref{theooneremone} holds in a similar way.
Particularly, $Y$ may be chosen from the larger space 
$$\Cop(\ol{I},\Hom{})\cap\Lt(I,\Hcom{}).$$
If $\gamma>0$ is constant then 
$$\Nrgom{\et,e}(t)=\gamma\Nroom{\et,e}(t)
=\gamma\int_{0}^{t}\nrom{\et,e}(s)ds,\quad
\Gamma(t)=\gamma t$$
and the upper bounds simplyfy, i.e.,
\begin{align*}
b_{\Et,\Ett,\rho;Y,\gamma}(t)
&=\gamma\e^{\gamma t}\int_{0}^{t}\e^{-\gamma s}f_{\Et,\Ett,Y,\gamma,\rho}(s)ds
+f_{\Et,\Ett,Y,\gamma,\rho}(t),\\
B_{\Et,\Et,\rho;Y,\gamma}(t)
&=\gamma\e^{\gamma t}\int_{0}^{t}\e^{-\gamma s}f_{\Et,\Ett,Y,\gamma,\rho}(s)ds.
\end{align*}
\end{rem}

If both $\gamma$ and $\rho$ are constant we have
$$\Nrgom{\Phi,\Psi}=\gamma\Nrom{\Phi,\Psi},\quad
\Nrom{\Phi,\Psi}:=\Nroom{\Phi,\Psi}.$$
In this case,
\begin{align*}
\nrom{\Phi,\Psi}(t)
&=\normepsom{\Phi}^2(t)+\rho\normmumoom{\curl\Psi}^2(t),\\
\Nrom{\Phi,\Psi}(t)
&=\normepsomt{\Phi}^2+\rho\normmumoomt{\curl\Psi}^2
\end{align*}
and
$$\nrom{\Phi,\Psi}=\Nrom{\Phi,\Psi}',\quad\Nrom{\Phi,\Psi}(t)
=\int_{0}^{t}\nrom{\Phi,\Psi}(s)ds.$$

\begin{theo}
\mylabel{theofive}
Let $\rho\in(0,1)$ and $\Et$ be an approximation satisfying \eqref{Etass}.
Then, for all $t\in\ol{I}$
\begin{align}
\mylabel{theofivemain}
\nrom{\et,e}(t)\leq\inf_{Y,\gamma}b_{\Et,\Ett\rho;Y,\gamma}(t),\quad
\Nrom{\et,e}(t)\leq\inf_{Y,\gamma}\hat{B}_{\Et,\Ett,\rho;Y,\gamma}(t),
\end{align}
where
\begin{align*}
b_{\Et,\Ett,\rho;Y,\gamma}(t)
&:=\gamma\e^{\gamma t}\int_{0}^{t}\e^{-\gamma s}f_{\Et,\Ett,Y,\gamma,\rho}(s)ds
+f_{\Et,\Ett,Y,\gamma,\rho}(t),\\
\hat{B}_{\Et,\Ett,\rho;Y,\gamma}(t)
&:=\e^{\gamma t}\int_{0}^{t}\e^{-\gamma s}f_{\Et,\Ett,Y,\gamma,\rho}(s)ds
\end{align*}
and the infima are taken over $\gamma\in\rzp$ and $Y\in\Cop(\ol{I},\Hcom{})$.
Here,
\begin{align*}
f_{\Et,\Ett,Y,\gamma,\rho}&:=g_{\Et,\Ett,Y,\gamma,\rho}+z_{\Et,\Ett,Y},\\
g_{\Et,\Ett,Y,\gamma,\rho}(t)
&:=(1-\rho)^{-1}\normmuom{\tilde{K}_{\Et,Y}}^2(t)+2\scpomt{\tilde{K}_{\Et,Y}}{\curl(\Ett-\pt\Et)}\\
&\qquad+\gamma^{-1}\normepsmoomt{\check{K}_{\Ett,Y}}^2
+(\gamma\rho)^{-1}\normmuomt{\mu^{-1}\curl\Ett-\pt Y}^2.
\end{align*}
\end{theo}

\begin{rem}
\mylabel{theofiveremone}
If $\Ett=\pt\Et$ then the estimates coincide with those of Theorem \ref{theoone}.
Again, Remark \ref{theooneremone} holds in a similar way.
In particular, $Y$ can be chosen from
$$\Cop(\ol{I},\Hom{})\cap\Lt(I,\Hcom{}).$$
\end{rem}

\begin{rem}
\mylabel{theofiveremtwo}
The absolute value of the zero term
$$z_{\Et,\Ett,Y}=\normepsom{\et}^2(0)+\normmumoom{\curl e}^2(0)+2\scpom{\tilde{K}_{\Et,Y}}{\curl e}(0)$$ 
can be estimated from above by the two quantities
\begin{align*}
\tilde{z}_{\Et,\Ett,Y}
&:=\noom{\et,e}(0)+2|\scpom{\tilde{K}_{\Et,Y}}{\curl e}(0)|\\
&\,\,=\normepsom{\et}^2(0)+\normmumoom{\curl e}^2(0)+2|\scpom{\tilde{K}_{\Et,Y}}{\curl e}|(0)\\
\leq\hat{z}_{\Et,\Ett,Y}
&:=\normepsom{\et}^2(0)+2\normmumoom{\curl e}^2(0)+\normmuom{\tilde{K}_{\Et,Y}}^2(0),
\end{align*}
which are nonnegative and easily computable.
The same manipulation can be done with the term 
$2\scpomt{\tilde{K}_{\Et,Y}}{\curl(\Ett-\pt\Et)}$,
taking, e.g., it's absolute value, 
which leads to some nonnegative $\tilde{g}_{\Et,\Ett,Y,\gamma,\rho}$.
Moreover,
\begin{itemize}
\item[\bf(i)] $z_{\Et,\Ett,Y}=0$, 
if $\et(0)=0$ and $\curl e(0)=0$.
\item[\bf(ii)] $\tilde{z}_{\Et,\Ett,Y}=0$, 
if any only if $\et(0)=0$ and $\curl e(0)=0$.
\item[\bf(iii)] $\hat{z}_{\Et,\Ett,Y}=0$, 
if any only if $\et(0)=0$ and $\curl e(0)=0$ and $\mu Y(0)=\curl\Et(0)$.
\item[\bf(iv)] $\tilde{g}_{\Et,\Ett,Y,\gamma,\rho}=0$, 
if and only if $\mu Y=\curl\Et$, $\pt\mu Y=\curl\Ett$ and $\curl Y=K-\eps\pt\Ett$.
\end{itemize}
Therefore, choosing the functional $f_{\Et,\Ett,Y,\gamma,\rho}$ 
with $\tilde{z}_{\Et,Y}$ 
or $\hat{z}_{\Et,Y}$ 
and $\tilde{g}_{\Et,\Ett,Y,\gamma,\rho}$ we see that
for all $t\in\ol{I}$
the functional $b_{\Et,\Ett,\rho;Y,\gamma}(t)$ vanishes, 
if and only if
\begin{align}
\Ett(0)&=E_{0}',&\curl\Et(0)&=\curl E_{0},\mylabel{bvanishtwo}\\
\mu Y&=\curl\Et,&\pt\mu Y&=\curl\Ett,&\curl Y&=K-\eps\pt\Ett.\mylabel{bvanishthree}
\end{align}
Thus, $\et$ and $\curl e$ vanish, if and only if $\nrom{\et,e}=0$,
which is implied by $b_{\Et,\Ett,\rho;Y,\gamma}=0$.
The latter constraint is equivalent to \eqref{bvanishtwo} and \eqref{bvanishthree}.
The same holds true for the energy norms $\Nrom{\et,e}$, $\Nrgom{\et,e}$ 
and the functionals $B_{\Et,\Ett,\rho;Y,\gamma}$, $\hat{B}_{\Et,\Ett,\rho;Y,\gamma}$.
\end{rem}

\begin{proofof}{Theorem \ref{theofour}}
We follow in close lines the proofs of Theorems \ref{theoone} and \ref{theothree}.
Since $\et\in\Hccom{}$ and $\pt e=\et+\Ett-\pt\Et$, we have  
\begin{align*}
\pt\noom{\et,e}(t)&=2\scpom{\eps\pt\et}{\et}(t)
+2\scpom{\mu^{-1}\curl e-Y+Y}{\curl\et}(t)\\
&\qquad+2\scpom{\mu^{-1}\curl e}{\curl(\Ett-\pt\Et)}(t)\\
&=2\scpom{K-\curl Y-\eps\pt\Ett}{\et}(t)-2\scpom{\tilde{K}_{\Et,Y}}{\curl\et}(t)\\
&\qquad+2\scpom{\curl(\Ett-\pt\Et)}{\mu^{-1}\curl e}(t)\\
&=-2\scpom{\check{K}_{\Ett,Y}}{\et}(t)-2\pt\scpom{\tilde{K}_{\Et,Y}}{\curl e}(t)\\
&\qquad+2\scpom{\tilde{K}_{\Et,Y}}{\curl(\Ett-\pt\Et)}(t)\\
&\qquad+2\scpom{\ub{\mu^{-1}\curl(\Ett-\pt\Et)
+\pt \tilde{K}_{\Et,Y}}_{=\mu^{-1}\curl\Ett-\pt Y}}{\curl e}(t).
\end{align*}
Thus, by integration
\begin{align}
&\qquad\noom{\et,e}(t)\nonumber\\
&=z_{\Et,\Ett,Y}+2\scpomt{\tilde{K}_{\Et,Y}}{\curl(\Ett-\pt\Et)}\mylabel{estntwo}\\
&\qquad-2\big(\ub{\scpom{\tilde{K}_{\Et,Y}}{\curl e}}_{=:S_{1}}(t)
-\ub{\scpomt{\mu^{-1}\curl\Ett-\pt Y}{\curl e}}_{=:S_{2}(t)}
+\ub{\scpomt{\check{K}_{\Ett,Y}}{\et}}_{=:S_{3}(t)}\big).\nonumber
\end{align}
If $\Ett=\pt\Et$ then \eqref{estntwo} coincides with \eqref{estnone}.
As before, we choose $\alpha:=1-\rho$ and $\beta:=\gamma\rho$
and estimate the scalar products $S_{\ell}$ as follows: 
\begin{align}
\begin{split}
2|S_{1}(t)|&\leq\alpha(t)\normmumoom{\curl e}^2(t)
+\alpha^{-1}(t)\normmuom{\tilde{K}_{\Et,Y}}^2(t)\\
2|S_{2}(t)|&\leq\int_{0}^{t}\beta(s)\normmumoom{\curl e}^2(s)ds
+\int_{0}^{t}\beta^{-1}(s)\normmuom{\mu^{-1}\curl\Ett-\pt Y}^2(s)ds\\
2|S_{3}(t)|&\leq\int_{0}^{t}\gamma(s)\normepsom{\et}^2(s)ds
+\int_{0}^{t}\gamma^{-1}(s)\normepsmoom{\check{K}_{\Ett,Y}}^2(s)ds
\end{split}\mylabel{estSltwo}
\end{align}
Inserting \eqref{estSltwo} into \eqref{estntwo} yields
\begin{align*}
\nrom{\et,e}(t)&\leq\int_{0}^{t}\gamma(s)\nrom{\et,e}(s)ds+f_{\Et,\Ett,Y,\gamma,\rho}(t),\\
\Nrgom{\et,e}'(t)&\leq\gamma(t)\Nrgom{\et,e}(t)+\gamma(t)f_{\Et,\Ett,Y,\gamma,\rho}(t).
\end{align*}
Finally, Gronwall's inequalities, 
i.e., \eqref{gronwalloneestone} and \eqref{gronwalltwoestone}, 
prove the assertions.
\end{proofof}

Since $\et=0$ implies 
$$e(t)=\int_{0}^{t}(\Ett-\pt\Et)(s)ds+e(0)$$
we obtain:

\begin{theo}
\mylabel{theosix}
Let approximations $\Et$, $\Ett$ and $Y$ as in Theorem \ref{theofour}
or Theorem \ref{theofive} be given.
Then, the following two statements are equivalent:
\begin{itemize}
\item[\bf(i)] $\Et(0)=E_{0}$ and $\Ett=\pt\Et$ 
and $b_{\Et,\Ett,\rho;Y,\gamma}=0$.
\item[\bf(ii)] $\Et=E$ and $\Ett=\pt E$ and $\mu Y=\curl E$.
\end{itemize}
In words: Let the approximations $\Et$, $\Ett$ satisfy $\Ett=\pt\Et$
and the first initial condition $\Et(0)=E_{0}$ exactly. Then, the functional 
$b_{\Et,\Ett,\rho;Y,\gamma}$ vanishes if and only if 
the approximation $\Et$ equals $E$ and $\mu Y$ equals $\curl E$. 
\end{theo}

\begin{rem}
\mylabel{theosixremone}
The assertions of the latter theorem remain valid
if we replace $b_{\Et,\Ett,\rho;Y,\gamma}$ 
by $B_{\Et,\Ett,\rho;Y,\gamma}$ or $\hat{B}_{\Et,\Ett,\rho;Y,\gamma}$. 
\end{rem}

\begin{rem}
\mylabel{theosixremtwo}
The latter theorems provide new variational formulations 
for the second order problem \eqref{maxeqcurlcurl}-\eqref{maxeqicptE}
and thus, in view of Remark \ref{secondtofirstorder},
for the original first order problem \eqref{maxeqone}-\eqref{maxeqictwo} as well.
\end{rem}

\section{Estimates for the approximation of the whole solution}\mylabel{bothfields}

By \eqref{maxeq} (or the basic equations \eqref{maxeqone}, \eqref{maxeqtwo})
we also get estimates for the errors $h$, $h_{t}$ of the magnetic fields 
$H$, $\p_{t}H$ and their approximations $\tilde{H}$, $\tilde{H}_{t}$.
E.g., by adding 
$-(\Ett,\tilde{H}_{t})+\iu M_{\Lambda}(\Et,\tilde{H})$ to \eqref{maxeq} we obtain
$$(\et,h_{t})-\iu M_{\Lambda}(e,h)=(f,g)
:=(F,G)-(\Ett,\tilde{H}_{t})+\iu M_{\Lambda}(\Et,\tilde{H}),$$
which reads explicitly 
\begin{align*}
\et-\eps^{-1}\curl h&=f=F-\tilde{E}_{t}+\eps^{-1}\curl\tilde{H},\\
h_{t}+\mu^{-1}\curl e&=g=G-\tilde{H}_{t}-\mu^{-1}\curl\Et.
\end{align*}
Therefore, we can estimate
\begin{align*}
\hat{n}_{h_{t},h,\rho}(t)
&:=\rho(t)\norm{h_{t}}^2_{\mu,\Omega}(t)+\norm{\curl h}^2_{\eps^{-1},\Omega}(t)\\
&\,\,\leq2\nrom{\et,e}(t)
+2\norm{f}^2_{\eps,\Omega}(t)+2\normmuom{g}^2(t)\\
&\,\,\leq2\inf_{Y,\gamma}b_{\Et,\Ett,\rho;Y,\gamma}(t)
+2\norm{f}^2_{\eps,\Omega}(t)+2\normmuom{g}^2(t),
\end{align*}
which yields
\begin{align*}
&\qquad\nrom{\et,e}(t)+\hat{n}_{h_{t},h,\rho}(t)\\
&=\norm{e_{t}}^2_{\eps,\Omega}(t)+\rho(t)\norm{\curl e}^2_{\mu^{-1},\Omega}(t)
+\rho(t)\norm{h_{t}}^2_{\mu,\Omega}(t)+\norm{\curl h}^2_{\eps^{-1},\Omega}(t)\\
&\leq3\inf_{Y,\gamma}b_{\Et,\Ett,\rho;Y,\gamma}(t)
+2\norm{f}^2_{\eps,\Omega}(t)+2\normmuom{g}^2(t).
\end{align*}
Of course, similar estimates hold for the other norms and functionals
and the estimates simplify in an obvious way if $\rho$ or $\gamma$ are positive constants.
The vector fields $(f,g)$ measure the error in the original first order equation \eqref{maxeq}.
Moreover, $(\et,h_{t})$ may be replaced by $\p_{t}(e,h)$ if for the approximations 
sufficient regularity is available. In this case, the error in the first order equation is
$$(f,g)=(F,G)-(\p_{t}-\iu M_{\Lambda})(\Et,\tilde{H})=(\p_{t}-\iu M_{\Lambda})(e,h).$$

\appendix

\section{Appendix: Gronwall inequalities}\mylabel{secappendix}

Gronwall inequalities can be found in almost any book about ordinary differential equations.
Since these estimates differ in small details, we present here two estimates, which meet our needs.
With
$$\Cop(\ol{I}):=\Cop(\ol{I},\rz),\quad\Cz(\ol{I}):=\Cz(\ol{I},\rz)$$
we have

\begin{lem}
\mylabel{gronwallone}
{\sf(differential form)}
Let $u\in\Cop(\ol{I})$ and $\varphi,\psi\in\Cz(\ol{I})$ 
with $\varphi\geq0$. If in $I$
$$u'\leq\varphi u+\psi$$
then for all $t\in\ol{I}$
\begin{align}
\mylabel{gronwalloneestone}
u(t)&\leq\exp(\Phi(t))\big(u(0)+\int_{0}^{t}\exp(-\Phi(s))\psi(s)ds\big),\quad
\Phi(t):=\int_{0}^{t}\varphi(s)ds.
\intertext{If $\varphi$ is a nonnegative constant then for all $t\in\ol{I}$}
\mylabel{gronwalloneesttwo}
u(t)&\leq\exp(\varphi t)\big(u(0)+\int_{0}^{t}\exp(-\varphi s)\psi(s)ds\big).
\intertext{If $\psi\leq c\in\rz$ then for all $t\in\ol{I}$}
\nonumber
u(t)&\leq(u(0)+ct)\exp(\Phi(t)).
\end{align}
\end{lem}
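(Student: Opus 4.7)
The plan is the textbook integrating-factor proof for Gronwall's inequality. Since $\varphi\in\Cz(\ol{I})$ with $\varphi\geq0$, the antiderivative $\Phi(t):=\int_{0}^{t}\varphi(s)ds$ is of class $C^{1}(\ol{I})$, nonnegative and nondecreasing, with $\Phi(0)=0$. I would introduce the auxiliary function
$$v(t):=u(t)\exp(-\Phi(t)),$$
which inherits the piecewise $C^{1}$-regularity of $u$, i.e., $v\in\Cop(\ol{I})$, so in particular $v$ is absolutely continuous on $\ol{I}$.

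On each smooth piece I would compute
$$v'(t)=\bigl(u'(t)-\varphi(t)u(t)\bigr)\exp(-\Phi(t))\leq\psi(t)\exp(-\Phi(t)),$$
using the hypothesis $u'\leq\varphi u+\psi$ and the positivity of the exponential. Integrating from $0$ to $t$ (legitimate by absolute continuity, treating any break points by splitting the integral) and inserting $v(0)=u(0)$ yields
$$u(t)\exp(-\Phi(t))\leq u(0)+\int_{0}^{t}\exp(-\Phi(s))\psi(s)ds,$$
and multiplying by the positive factor $\exp(\Phi(t))$ produces \eqref{gronwalloneestone}. Specialising to a nonnegative constant $\varphi$ gives $\Phi(t)=\varphi t$ and hence \eqref{gronwalloneesttwo} immediately.

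For the third estimate, I would insert the bound $\psi\leq c$ into \eqref{gronwalloneestone}. Since $\Phi\geq 0$ on $\ol{I}$ we have $\exp(-\Phi(s))\leq 1$, so in the relevant case $c\geq 0$
$$\int_{0}^{t}\exp(-\Phi(s))\psi(s)ds\leq c\int_{0}^{t}\exp(-\Phi(s))ds\leq ct,$$
and \eqref{gronwalloneestone} collapses to $u(t)\leq(u(0)+ct)\exp(\Phi(t))$. The only real point of care in the argument is the regularity step: because $u$ lies in $\Cop(\ol{I})$ rather than in $C^{1}(\ol{I})$, the chain rule and the fundamental theorem of calculus have to be applied to the absolutely continuous function $v$, handling the finitely many break points by splitting the integral. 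This is routine but deserves an explicit remark. Apart from it there is no genuine obstacle; the standard multiplier trick does all the work.
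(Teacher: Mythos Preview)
Your proof is correct and follows the same integrating-factor argument as the paper: differentiate $\exp(-\Phi)u$, use the hypothesis, and integrate. Your restriction of the third estimate to $c\geq0$ is not an omission but a necessary correction: the stated bound actually fails for $c<0$ (take $\varphi\equiv1$, $\psi\equiv c=-1$, $u(0)=0$, so $u(t)=1-e^{t}$; at $t=1$ one has $1-e\approx-1.72>-e\approx-2.72$), so the paper's dismissal of this case as ``trivial'' glosses over a small inaccuracy in the statement.
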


\begin{proof}
Since
$$(\exp(-\Phi)u)'=\exp(-\Phi)(u'-\varphi u)\leq\exp(-\Phi)\psi$$
we have 
$$\exp(-\Phi(t))u(t)\leq u(0)+\int_{0}^{t}\exp(-\Phi(s))\psi(s)ds,$$
which proves the first part. The other assertions are trivial.
\end{proof}

Using the notations of the latter lemma, we have

\begin{lem}
\mylabel{gronwalltwo}
{\sf(integral form)}
Let $u,\varphi,\psi\in\Cz(\ol{I})$ with $\varphi\geq0$. If for all $t\in\ol{I}$
$$u(t)\leq\int_{0}^{t}\varphi(s)u(s)ds+\psi(t)$$
then for all $t\in\ol{I}$
\begin{align}
\mylabel{gronwalltwoestone}
u(t)&\leq\exp(\Phi(t))\int_{0}^{t}\exp(-\Phi(s))\varphi(s)\psi(s)ds+\psi(t).
\intertext{If $\varphi$ is a nonnegative constant then for all $t\in\ol{I}$}
\mylabel{gronwalltwoesttwo}
u(t)&\leq\varphi\exp(\varphi t)\int_{0}^{t}\exp(-\varphi s)\psi(s)ds+\psi(t).
\intertext{If $\psi\leq c\in\rz$ then for all $t\in\ol{I}$}
\nonumber
u(t)&\leq c\exp(\Phi(t)).
\end{align}
\end{lem}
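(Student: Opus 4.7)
The plan is to reduce the integral-form inequality to the differential-form Gronwall inequality already established in Lemma \ref{gronwallone}. Introduce the auxiliary function
$$v(t):=\int_{0}^{t}\varphi(s)u(s)\,ds,$$
which is continuously differentiable on $\ol{I}$ with $v(0)=0$ and $v'(t)=\varphi(t)u(t)$. The hypothesis rewrites as $u(t)\leq v(t)+\psi(t)$, and since $\varphi\geq 0$ we may multiply through by $\varphi(t)$ to obtain the differential inequality
$$v'(t)=\varphi(t)u(t)\leq\varphi(t)v(t)+\varphi(t)\psi(t).$$

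This is exactly the setting of Lemma \ref{gronwallone} applied to $v$, with source term $\tilde\psi:=\varphi\psi$ (continuous, since $\varphi,\psi\in\Cz(\ol{I})$). Invoking \eqref{gronwalloneestone} and using $v(0)=0$ yields
$$v(t)\leq\exp(\Phi(t))\int_{0}^{t}\exp(-\Phi(s))\varphi(s)\psi(s)\,ds.$$
Combining with $u(t)\leq v(t)+\psi(t)$ gives \eqref{gronwalltwoestone} immediately. The constant-$\varphi$ case \eqref{gronwalltwoesttwo} is then just the specialization $\Phi(t)=\varphi t$.

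For the final clause, suppose $\psi\leq c$. Since $\varphi\geq 0$, monotonicity of the integral in \eqref{gronwalltwoestone} combined with the elementary identity
$$\int_{0}^{t}\exp(-\Phi(s))\varphi(s)\,ds=1-\exp(-\Phi(t)),$$
(which follows from $\frac{d}{ds}[-\exp(-\Phi(s))]=\varphi(s)\exp(-\Phi(s))$) gives
$$u(t)\leq c\exp(\Phi(t))(1-\exp(-\Phi(t)))+c=c\exp(\Phi(t)),$$
as claimed. No step is genuinely difficult here; the only minor point to watch is that the reduction $v'\leq\varphi v+\varphi\psi$ requires the sign hypothesis $\varphi\geq 0$ to preserve the inequality when multiplying, and that $\tilde\psi=\varphi\psi$ is indeed continuous so that Lemma \ref{gronwallone} applies verbatim.
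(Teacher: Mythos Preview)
Your proof is correct and follows essentially the same approach as the paper: introduce the antiderivative $v(t)=\int_0^t\varphi(s)u(s)\,ds$ (the paper calls it $\tilde u$), derive the differential inequality $v'\leq\varphi v+\varphi\psi$, apply Lemma~\ref{gronwallone}, and then handle the special cases by the same elementary integral identity. There is no substantive difference between your argument and the paper's.
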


\begin{proof}
Set $\displaystyle\tilde{u}(t):=\int_{0}^{t}\varphi(s)u(s)ds$ and $\tilde{\psi}:=\varphi\psi$. 
Then $\tilde{u}'=\varphi u\leq\varphi\tilde{u}+\tilde{\psi}$.
Lemma \ref{gronwallone} yields
$$u(t)\leq\tilde{u}(t)+\psi(t)
\leq\exp(\Phi(t))\big(\tilde{u}(0)+\int_{0}^{t}\exp(-\Phi(s))\tilde{\psi}(s)ds\big)+\psi(t),$$
which proves the first assertion since $\tilde{u}(0)=0$. The second assertion is trivial.
To prove the last part we compute with $\exp(-\Phi)'=-\exp(-\Phi)\varphi$
$$u(t)\leq c\exp(\Phi(t))\int_{0}^{t}\exp(-\Phi(s))\varphi(s)ds+c=c\exp(\Phi(t)).$$
\end{proof}

\begin{rem}
\mylabel{gronwallrem}
The differential and integral form are equivalent.
\end{rem}
 
\begin{proof}
Let the assumptions of Lemma \ref{gronwallone} be satisfied.
Then, 
$$u(t)\leq\int_{0}^{t}\varphi(s)u(s)ds+\ub{\int_{0}^{t}\psi(t)ds+u(0)}_{=:\tilde{\psi}(t)}$$
and by Lemma \ref{gronwalltwo}
\begin{align*}
u(t)&\leq\exp(\Phi(t))\int_{0}^{t}\ub{\exp(-\Phi(s))\varphi(s)}_{\uparrow}
\ub{\tilde{\psi}(s)}_{\downarrow}ds+\tilde{\psi}(t)\\
&=\exp(\Phi(t))\big(\int_{0}^{t}\exp(-\Phi(s))\ub{\tilde{\psi}'(s)}_{=\psi(s)}ds
-\ub{\exp(-\Phi(s))\tilde{\psi}(s)\Big|_{0}^{t}}_{=\exp(-\Phi(t))\tilde{\psi}(t)-\tilde{\psi}(0)}\big)
+\tilde{\psi}(t),
\end{align*}
which completes the proof since $\tilde{\psi}(0)=u(0)$.
\end{proof}

\begin{cor}
\mylabel{gronwallcor}
\begin{itemize}
\item[\bf(i)] Let the assumptions of Lemma \ref{gronwallone} by satisfied. 
Then, $u(0)\leq0$ and $\psi\leq0$ imply $u\leq0$. 
Hence, if $u\geq0$ then $u(0)\leq0$ and $\psi\leq0$ imply $u=0$.
\item[\bf(ii)] Let the assumptions of Lemma \ref{gronwalltwo} by satisfied. 
Then, $\psi\leq0$ implies $u\leq0$. 
Hence, if $u\geq0$ then $\psi\leq0$ implies $u=0$.
\end{itemize}
\end{cor}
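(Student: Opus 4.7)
The plan is to substitute the sign hypotheses directly into the explicit estimates already proved in Lemmas \ref{gronwallone} and \ref{gronwalltwo}. Since exponentials are strictly positive, the conclusions will reduce to checking the signs of the remaining factors.

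For part (i), I would start from \eqref{gronwalloneestone}, namely
$$u(t)\leq\exp(\Phi(t))\Bigl(u(0)+\int_{0}^{t}\exp(-\Phi(s))\psi(s)\,ds\Bigr).$$
By assumption $u(0)\leq0$, and the integrand $\exp(-\Phi(s))\psi(s)$ is a product of a positive exponential and a nonpositive function, hence is nonpositive. The outer factor $\exp(\Phi(t))$ is positive. Therefore the right-hand side is nonpositive, giving $u\leq0$. The ``hence'' clause then follows trivially: combining $u\leq0$ with the extra hypothesis $u\geq0$ forces $u=0$.

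For part (ii), the argument is completely analogous, now using \eqref{gronwalltwoestone},
$$u(t)\leq\exp(\Phi(t))\int_{0}^{t}\exp(-\Phi(s))\varphi(s)\psi(s)\,ds+\psi(t).$$
Here $\varphi\geq0$ and $\psi\leq0$ by assumption, so $\varphi\psi\leq0$; since $\exp(-\Phi(s))>0$, the integral is nonpositive, and $\psi(t)\leq0$ as well. Thus $u\leq0$, and again the additional assumption $u\geq0$ yields $u=0$.

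There is no real obstacle: the corollary is a direct sign-reading of the two Gronwall estimates, and no new analytic input is required. The only small bookkeeping point is to remember that $\varphi\geq0$ is part of the hypotheses of both lemmas, which is exactly what makes $\varphi\psi\leq0$ in part (ii) and ensures that $\exp(\pm\Phi)$ is a well-defined positive function in both parts.
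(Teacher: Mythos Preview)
Your proof is correct and is exactly the natural argument: substitute the sign hypotheses into the explicit bounds \eqref{gronwalloneestone} and \eqref{gronwalltwoestone} and read off the result. The paper in fact states this corollary without proof, treating it as an immediate consequence of the two lemmas, so your write-up supplies precisely the short verification the authors omitted.
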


\begin{acknow}
The research was supported by the University of Jyv\"askyl\"a and the Academy of Finland.
\end{acknow}

{}

\addressesdirksergeytuomo

\end{document}